\documentclass[11pt]{article}
\usepackage{latexsym}
\usepackage{epsfig,enumerate}
\usepackage{amsmath,amsthm,amssymb}
\usepackage{algorithm2e}
\usepackage{bbold}
\usepackage{hyperref}
\usepackage[doi=false,isbn=false,url=false,style=numeric,giveninits,maxbibnames=99]{biblatex}
\addbibresource{fdst.bib}
\renewbibmacro{in:}{}
\allowdisplaybreaks

\usepackage[margin=1in]{geometry}

 \usepackage[usenames]{color}\usepackage{graphicx}

 %\def\red{\color{red}}

% %%deepak added

 \newcommand{\of}[1]{\left( #1 \right) }

 \newcommand{\set}[1]{\left\{ #1 \right\}}
 
 \newcommand{\tbf}[1]{\textbf{#1}}
 \newcommand{\bfrac}[2]{\of{\frac{#1}{#2}}}
 
 \renewcommand{\phi}{\varphi}
 \renewcommand{\L}{L}

 \newcommand{\tz}{\tilde{z}}
 \newcommand{\ttz}{\tilde{\tbf{z}}}
\DeclareMathOperator{\Op}{Op}
\DeclareMathOperator{\op}{op}

\newtheorem{theorem}{Theorem}[section]
\newtheorem{problem}[theorem]{Problem} 
 
\newtheorem{proposition}[theorem]{Proposition}

{\theoremstyle{definition}

 }

%\addbibresource{bibliography.bib}

\title{Full Degree Spanning Trees in Random Regular Graphs}

\author{Sarah Acquaviva\thanks{ \url{acquavivas1@montclair.edu}}\qquad\qquad Deepak Bal\thanks{ \url{deepak.bal@montclair.edu}} \\
\quad \\
Department of Mathematics \\
Montclair State University \\
Montclair, NJ 07043 USA}

\date{}
\begin{document}
\maketitle

\begin{abstract}

We study the problem of maximizing the number of full degree vertices in a spanning tree $T$ of a graph $G$; that is, the number of vertices whose degree in $T$ equals its degree in $G$. In cubic graphs, this problem is equivalent to maximizing the number of leaves in $T$ and minimizing the size of a connected dominating set of $G$. 
We provide an algorithm which produces (w.h.p.) a tree with at least $0.4591n$ vertices of full degree (and also, leaves) when run on a random cubic graph. This improves the previously best known lower bound of $0.4146 n$. We also provide lower bounds on the number of full degree vertices in the random regular graph $G(n,r)$ for $r \le 10$.

\end{abstract}

\section{Introduction}
%*From poster* A \tbf{cubic graph} is a graph in which all vertices have degree 3. A vertex $v$ of the graph $G$ is of \tbf{full degree} in a spanning tree $T$ if $deg_T(v)=deg_G(v)$. A \tbf{dominating set} of vertices is a subset $S\subseteq V(G)$ such that every vertex in $G$ is either in $S$ or adjacent to a vertex in $S$. If the subgraph induced by this set ($G[S]$) is connected, then $S$ is said to be a \tbf{connected dominating set}. Let $\varphi(G)$ be the maximum number of full degree vertices in any spanning tree of $G$, $\lambda(G)$ be the maximum number of leaves in a spanning tree of $G$, and $\gamma_C(G)$ be the minimum number of vertices in a connected dominating set of $G$. As pointed out in \cite{cwy}, $\lambda(G) = n - \gamma_C(G).$ For spanning trees of cubic graphs, we have $V_1=V_3+2$ where $V_3$ is the number of full degree vertices in the tree and $V_1$ is the number of leaves. Thus, $\lambda(G) = \varphi(G) + 2$. Thus, in connected cubic graphs, the problems of finding $\lambda, \varphi$ and $\gamma_C$ are all equivalent.

%Let $G(n,3)$ denote a \tbf{random cubic graph}, that is, a graph  chosen uniformly at random from all cubic graphs on vertex set $[n] := \{1,2,\ldots, n\}$.

%--------
A vertex $v$ of graph $G$ is of \emph{full degree} in a spanning tree $T$ if $deg_T(v)=deg_G(v)$. Let $\varphi(G)$ be the maximum number of full degree vertices in any spanning tree of $G$. This problem, variously referred to as the \emph{full degree spanning tree} (FDST) problem \cite{bhatia} or the \emph{degree preserving spanning tree} (DPST) problem \cite{broersma}, was first studied by Lewinter \cite{lewinter} and subsequently studied by Pothof and Schut \cite{pothof-schut}, Broersma et al.\ \cite{broersma}, and Bhatia et al.\ \cite{bhatia}. The FDST problem has the following nice application to water distribution networks (for more details see \cite{bhatia,broersma, pothof-schut}). To measure the flow across each edge in a water network, one may place flow meters on the edges of a co-tree\footnote{A co-tree in a graph $G$ is a set of edges whose removal leaves a spanning tree in $G$.} and then infer the flow on the edges of the tree. Alternatively, one may place pressure meters (which are apparently much less expensive) on the end points of an edge and compute the flow across that edge. In this set up, we would only need to place pressure meters on vertices which are incident with co-tree edges. In other words, we would \emph{not} need to place a pressure meter on any vertex of full degree in the tree, and so maximizing the number of such vertices minimizes the cost of pressure meters.  For the most part, the FDST problem has been studied from a complexity theoretic and algorithmic viewpoint (see \cite{gaspers, gnw, lrs} for some fairly recent work). The problem is, in general, NP-Hard.

There are simple absolute bounds on $\phi(G)$ in terms of the maximum degree $\Delta = \Delta(G)$ and the minimum degree $\delta = \delta(G)$.
\begin{proposition}\label{prop:simple-bds}
For any connected graph $G$ on $n$ vertices, we have 
\[\frac{n}{\Delta(\Delta-1) +1}\le\varphi(G) \le \frac{n-2}{\delta-1}.\]
\end{proposition}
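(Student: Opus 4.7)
The plan is to argue the two bounds separately.

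For the upper bound, I would use a degree-sum argument. In any spanning tree $T$ of $G$, $\sum_v \deg_T(v) = 2(n-1)$; every vertex contributes at least $1$ and every full-degree vertex contributes at least $\delta$. Writing $F$ for the set of full-degree vertices, $2(n-1) \ge \delta|F| + (n-|F|)$ rearranges to $|F| \le (n-2)/(\delta-1)$, and maximizing over $T$ yields the bound.

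The lower bound requires more work. My plan is to produce a set $S \subseteq V(G)$ of size at least $n/(\Delta(\Delta-1)+1)$ which can be made entirely full-degree in a single spanning tree. The key structural lemma I rely on is: \emph{if no two distinct vertices of $S$ have a common neighbor in $G$, then the edge set $E_S := \{uv \in E(G) : \{u,v\} \cap S \ne \emptyset\}$ is a forest.} Granting this, since $G$ is connected I can extend $E_S$ to a spanning tree $T$; each $v \in S$ then has all its $G$-edges already in $E_S \subseteq T$, so is full-degree, and $\varphi(G) \ge |S|$. To construct $S$, I would iterate: while some vertex is unblocked, pick one such $v$, add it to $S$, and block $\{v\} \cup \bigcup_{w \in N(v)}(N(w)\setminus\{v\})$ --- i.e., $v$ together with every vertex sharing a neighbor with $v$. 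This maintains the no-common-neighbor invariant for $S$, and since each step blocks at most $1 + \sum_{w\in N(v)}(\deg(w)-1) \le 1 + \Delta(\Delta-1)$ vertices, the procedure terminates with $|S| \ge n/(\Delta(\Delta-1)+1)$.

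The main obstacle is justifying the structural lemma. I would argue by contradiction: suppose $E_S$ contains a cycle $C = x_1 x_2 \cdots x_k x_1$. Each edge of $C$ lies in $E_S$, so no two consecutive $x_i$ can both be outside $S$. If some $x_j \notin S$, then $x_{j-1}, x_{j+1} \in S$ share $x_j$ as a common neighbor, contradicting the invariant. So every $x_i \in S$; but then $x_2 \in N(x_1) \cap N(x_3)$ when $k \ge 4$, and $x_3 \in N(x_1) \cap N(x_2)$ when $k = 3$, each again violating the invariant. Hence $E_S$ must be a forest, and the lower bound follows.
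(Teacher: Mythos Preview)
Your upper-bound argument is identical to the paper's degree-sum count.

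For the lower bound the underlying idea is the same---construct a set of vertices whose stars can all sit inside one spanning tree, then lower-bound its size greedily---but the packaging differs. The paper works in the square graph: it takes an independent set $I$ of $G^2$, notes that the vertices of $I$ have pairwise disjoint neighbourhoods (so all of $I$ can be made full-degree at once), and then invokes the bound $\alpha(G^2)\ge n/(\Delta(G^2)+1)$. You instead run an explicit greedy, prove the acyclicity of $E_S$ by a short cycle-chasing lemma, and count blocked vertices by hand. One genuine advantage of your version is that your invariant ``no two vertices of $S$ share a common neighbour'' is strictly weaker than independence in $G^2$ (it permits adjacent vertices in $S$), and this is exactly what makes the per-step block size come out to $1+\sum_{w\in N(v)}(\deg(w)-1)\le 1+\Delta(\Delta-1)$ rather than $1+\Delta+\Delta(\Delta-1)=1+\Delta^2$. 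The paper's final inequality $\Delta(G^2)\le \Delta(\Delta-1)$ is in fact not true in general (e.g.\ a long path has $\Delta=2$ but $\Delta(G^2)=4$), so your more elementary route actually lands cleanly on the stated denominator where the $G^2$ shortcut, taken literally, would only give $n/(\Delta^2+1)$.
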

\begin{proof}
We first prove the lower bound. Let $G^2$ denote the square of $G$, i.e., the graph formed by joining all vertices at distance at most 2 in $G$. If $I$ is an independent set in $G^2$, then the vertices of $I$ have disjoint neighborhoods in $G$. Thus all the vertices of $I$ can be taken to be of full degree in a spanning tree of $G$. So \[\phi(G) \ge \alpha(G^2) \ge \frac{n}{\Delta(G^2) +1} \ge \frac{n}{\Delta(\Delta-1)+1}\]
where we have used the well-known bound $\alpha(G) \ge n/(\Delta+1)$ for any graph $G$. 

To prove the upper bound, we let $T$ be a spanning tree of $G$ and let $d_T(v)$ denote the degree of vertex $v$ in $T$. 
Let $X$ be the set of all full degree vertices in $T$. Since all full degree vertices have degree at least $\delta$ and all non-full degree vertices in $T$ have degree at least 1,
\begin{align*}
2(n-1) = \sum_{v} d_T(v) &= \sum_{v \in X} d_T(v) + \sum_{v \notin X} d_T(v)\\
&\geq |X|\cdot \delta + (n-|X|)\cdot 1\\
&= (\delta-1)|X| + n.
\end{align*}
Thus, 
\[|X| \leq \frac{n-2}{\delta-1}\]
\end{proof}

For a connected $r$-regular graph $G$, we then have the bounds $\frac{n}{r^2 - r + 1} \le \phi(G) \le \frac{n}{r-1}$. The upper bound is essentially tight as can be seen by taking $G = K_{r-1}\square C_{n/(r-1)}$. In this construction, one can take one vertex from each of $n/(r-1) - 2$ copies of $K_{r-1}$ to be of full degree and so $\phi(G)\ge n/(r-1) - 2$. See Section \ref{sec:conc} for a question about the tightness of the lower bound. 

In the case of cubic graphs (i.e. $3$-regular graphs), this problem has been studied before under a different guise. Let $G$ be a connected graph with $n$ vertices.
A \emph{dominating set} of vertices is a subset $S\subseteq V(G)$ such that every vertex in $G$ is either in $S$ or adjacent to a vertex in $S$. If $G[S]$ is connected, then $S$ is said to be a \emph{connected dominating set}. Let $\gamma_C(G)$ be the minimum number of vertices in a connected dominating set of $G$ and let $\lambda(G)$ be the maximum number of leaves in a spanning tree of $G$.
 As observed in \cite{cwy}, the parameters $\gamma_C$ and $\lambda$ satisfy
 \[\lambda(G) = n- \gamma_C(G).\] To see this, note that (i) given a spanning tree $T$ with leaf set $L$, we have that $V(T)\setminus L$ is a connected dominating set and (ii) given a connected dominating set $D$, there is a spanning tree of $G$ in which $V(G)\setminus D$ is the leaf set.
 These parameters have been very well studied (see e.g.\ \cite{cwy}, \cite{griggs}, \cite{storer}, and others). For every connected cubic graph $G$, Storer \cite{storer} found that $\lambda(G)\geq\lceil(n/4)+2\rceil$. Duckworth and Wormald \cite{duckworth-wormald} proved that $\gamma_C(G)\leq 2n/3+O(1)$ for cubic graphs $G$ with girth at least 5. Griggs et al \cite{griggs} found that $\lambda(G)\geq\lceil(n/3)+(4/3)\rceil$ for every connected cubic graph $G$ that does not have a subgraph isomorphic to $K_4$ with one edge removed. 
 %Lory\'{s} and Zwo\'{z}niak \cite{lorys} proved that $\lambda(G)$ is approximable within $7/4$ for cubic graphs. 
 
We have the following observation which shows that in connected cubic graphs, the problems of finding $\lambda, \varphi$ and $\gamma_C$ are all equivalent. 
\begin{proposition}
    
For any connected cubic graph $G$, we have $\lambda(G) = \phi(G)+2$. Furthermore, for any $r\ge 4$ and any connected $r$-regular graph $G$, we have $\lambda(G) \ge (r-2)\phi(G) +2$.
\end{proposition}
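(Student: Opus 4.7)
The plan is a direct handshake computation on any spanning tree $T$ of a connected $r$-regular graph $G$. Partition $V(G)$ into three sets: the leaf set $L(T) = \{v : d_T(v) = 1\}$, the full-degree set $F(T) = \{v : d_T(v) = r\}$, and the remaining ``internal non-full'' vertices $M(T) = \{v : 2 \le d_T(v) \le r-1\}$. Using $|L(T)| + |F(T)| + |M(T)| = n$ and the fact that every $v \in M(T)$ has $d_T(v) \ge 2$, I would substitute into the handshake identity
\[
2(n-1) = \sum_{v \in V(G)} d_T(v) \;\ge\; |L(T)| + r\,|F(T)| + 2\,|M(T)|
\]
and solve to obtain the key inequality
\[
|L(T)| \;\ge\; (r-2)\,|F(T)| + 2,
\]
valid for every spanning tree $T$ of any connected $r$-regular graph $G$ with $r \ge 2$.

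For the cubic case $r = 3$, the interval $[2, r-1] = \{2\}$ forces every $v \in M(T)$ to have $d_T(v) = 2$ exactly, so the inequality above becomes the \emph{equality} $|L(T)| = |F(T)| + 2$ for every spanning tree $T$. Applying this equality once to a spanning tree $T^*$ that attains $\phi(G)$ full-degree vertices gives $\lambda(G) \ge |L(T^*)| = \phi(G) + 2$; applying it again to a spanning tree $T^{**}$ that attains $\lambda(G)$ leaves gives $\lambda(G) = |L(T^{**})| = |F(T^{**})| + 2 \le \phi(G) + 2$. Combining yields $\lambda(G) = \phi(G) + 2$.

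For $r \ge 4$, the inequality above need not be tight because internal vertices may have $T$-degree strictly greater than $2$, so I would simply apply it once to a spanning tree $T^*$ achieving $\phi(G) = |F(T^*)|$ full-degree vertices. Then $\lambda(G) \ge |L(T^*)| \ge (r-2)\,\phi(G) + 2$, as desired.

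There is essentially no serious obstacle: the whole argument is a counting identity, and the only subtlety is recognizing that when $r = 3$ the ``middle'' degrees are forced to equal $2$, which is precisely what collapses the inequality into an equality and lets us compare the two extremal spanning trees through the same linear relation.
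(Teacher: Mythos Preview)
Your proposal is correct and follows essentially the same route as the paper: both arguments apply the handshake identity to an arbitrary spanning tree $T$, bound the contribution of non-leaf non-full vertices below by $2$ per vertex, and rearrange to obtain $|L(T)| \ge (r-2)|F(T)| + 2$, with equality forced when $r=3$. If anything, you are a bit more explicit than the paper in spelling out why the per-tree identity in the cubic case yields both inequalities $\lambda(G) \ge \phi(G)+2$ and $\lambda(G) \le \phi(G)+2$ via two different extremal trees.
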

\begin{proof}
    Let $r\ge 3$ and let $G$ be a connected $r$-regular graph.
    Let $T$ be a spanning tree of $G$ and let $x_i$ be the number of vertices in $T$ of degree $i$ for $i\in [r]$.  Then 
    \begin{align*}2(n-1) =\sum_{v\in T}d_T(v) &= \sum_{i=1}^r ix_i\\
    &= 2\of{\sum_{i=1}^r x_i} - x_1 + \sum_{i=3}^r (i-2)x_i \\
    & = 2n - x_1 + \sum_{i=3}^r (i-2)x_i \\
    & \ge 2n - x_1 +(r-2)x_r
    \end{align*}
    where the last inequality is equality in the case $r=3$.
    Thus $x_1 \ge (r-2)x_r+2$ and the result follows.
\end{proof}

%In doing so, we will also find a lower bound for the maximum number of leaves in a spanning tree of $G(n,3)$ and an upper bound for the minimum size of a connected dominating set of $G(n,3)$.

%A \tbf{dominating set} of vertices is a subset $S\subseteq V(G)$ such that every vertex in $G$ is either in $S$ or adjacent to a vertex in $S$. If the subgraph induced by this set ($G[S]$) is connected, then $S$ is said to be a \tbf{connected dominating set}. We let $\gamma_C(G)$ be the minimum number of vertices in a connected dominating set of $G$ and $\lambda(G)$ be the maximum number of leaves in a spanning tree of $G$. As pointed out in \cite{cwy}, $\lambda(G) = n - \gamma_C(G).$

%For spanning trees of cubic graphs, we have $V_1=V_3+2$ where $V_3$ is the number of full degree vertices in the tree and $V_1$ is the number of leaves. Thus, $\lambda(G) = \varphi(G) + 2$.

In this paper, we seek to understand the average behavior of $\phi$ for $r$-regular graphs by considering random regular graphs.  A \emph{random $r$-regular graph}, denoted $G(n,r)$, is a graph  chosen uniformly at random from all $r$-regular graphs on vertex set $[n] := \{1,2,\ldots, n\}$. For background on random regular graphs, see Wormald's survey \cite{wormald-rrg}.  
%We provide an algorithm which produces a tree with many vertices of full degree when run on a random $r$-regular graph.
Duckworth \cite{duckworth} and Duckworth and Mans \cite{duckworth-mans} developed an algorithm for the connected dominating set problem and analyzed it on random $d$-regular graphs for $d$ fixed.
An event occurs \emph{with high probability (w.h.p.)}  if the probability that the event occurs tends towards 1 as $n$ tends towards $\infty$. Duckworth's  algorithm \cite{duckworth} produces a connected dominating set on the random cubic graph $G\sim G(n,3)$ that w.h.p.\ has size less than $0.5854n + o(n)$. Thus, w.h.p.\ his algorithm gives $\gamma_C(G)\le 0.5854n + o(n)$ and $\varphi(G)\ge 0.4146n + o(n)$.

We approach this problem by providing an algorithm that attempts to maximize the number of full degree vertices in spanning trees of random regular graphs. Somewhat surprisingly, our simple algorithm gives a decent improvement over the bounds given in \cite{duckworth, duckworth-mans} for random cubic graphs.
Algorithm \ref{alg:algo} is based on a breadth first search which only explores from certain vertices. The algorithm iteratively builds a forest $T$. At each step, a random vertex $v$ is chosen (either a leaf of the current forest or an as-yet unseen vertex) and
the neighbors of $v$ are exposed. If at most one of $v$'s neighbors lies in the current forest $T$, then $v$ may be safely added to $T$ as a full degree vertex. 
In the algorithm below, let $S_G(v)$ represent the star centered at $v$ in $G$, i.e., the graph with vertex set $\set{v}\cup N_G(v)$ and edge set $\{vx\,:\,x\in N(v)\}$. 

   \begin{algorithm}[]
	\KwIn{Connected $r$-regular graph $G=(V,E)$.}
	\KwOut{Tree $T$ with full degree vertices $F$.}
	%Let $T=(V_T,E_T)$ be an empty graph\;
	Select arbitrary $v \in V$\;
	$T = S_G(v)$\;
	$\L=N_G(v)$, $Z_r=V\setminus V_T$\;
	\While{$\L\cup Z_r \neq \emptyset$}
 	{   
 	    \uIf{$\L \neq \emptyset$}{Select $v \in \L$ u.a.r.}
  		\uElse{Select $v \in Z_r$ u.a.r.}
  		\uIf{ $|V_T\cap N_G(v)| \le 1$}{
  		$T = T\cup S_G(v)$\;
  		$F = F\cup \set{v}$\;
  		$X = Z_r\cap N_G(v)$\;
  		Move $X$ from $Z_r$ to $\L$
  		%$Z_r = Z_r\setminus X$\;
  		%$\L = \L\cup X$\;
  		}
  		\uElse{$Z_r = Z_r\setminus N_G(v)$\;
  		$\L = \L\setminus N_G(v)$\;}
  	}
  	Complete the forest $T$ to a spanning tree arbitrarily\;
    \vspace{.2in}
   
	\caption{Full Degree Tree}
	\label{alg:algo}
\end{algorithm}

Our main results, the bounds determined by this algorithm, are as follows.

\begin{theorem}\label{thm:main}When run on a random cubic graph $G\sim G(n,3)$,
Algorithm \ref{alg:algo} w.h.p.\ produces a tree $T$ with at least $0.4591n$ vertices of full degree. Thus $\varphi(G)\geq0.4591n, \lambda(G)\geq0.4591n, \text{ and }\gamma_C(G)\le 0.5409n$. When run on a random $r$-regular graph $G(n,r)$ with $4\le r \le 10$, Algorithm \ref{alg:algo} w.h.p.\ produces a tree with at least $f_r n$ vertices of full degree, where values of $f_r$ are shown in Table \ref{tab:fr}. 
\end{theorem}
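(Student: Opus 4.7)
The plan is to analyze Algorithm \ref{alg:algo} on $G(n,r)$ via the pairing (configuration) model together with Wormald's differential equation method, exploiting deferred decisions: whenever the algorithm examines a vertex $v$ and needs its yet-unexposed neighbors, we reveal them by pairing each of $v$'s unmatched half-edges with a uniform random other unmatched half-edge. The algorithm's state then becomes a Markov chain whose macroscopic summary statistics we track.

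The natural state variables are the sizes $|L|, |Z_r|, |F|$ together with the total numbers of unmatched half-edges at each vertex class: at active leaves in $L$, at untouched vertices in $Z_r$ (exactly $r$ each), and at ``dead'' vertices that were removed from $L\cup Z_r$ in the else-branch but still have half-edges in the pairing. At each iteration of the while loop, a vertex $v$ is selected from $L$ (if nonempty) or $Z_r$, and its $r-1$ or $r$ unmatched half-edges are paired off. Each partner lies in a given class with probability proportional to that class's total of unmatched points, and $v$ joins $F$ exactly when no (resp. at most one, in the $Z_r$ case) partner lies in $V_T$. Computing these conditional probabilities and the expected one-step change in each state variable gives a system of ODEs.

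Verifying Wormald's hypotheses — bounded one-step changes ($O(r)$), Lipschitz right-hand sides, and a bounded error term in the expected changes — is routine, and yields concentration of each scaled variable $y_i(t)/n$ around the ODE solution w.h.p. Integrating the system from the initial state $(|F|,|L|,|Z_r|) = (1, r, n-r-1)$ until $|L|+|Z_r|=0$ and reading off the final $|F|/n$ produces the claimed $0.4591$ for $r=3$ and the values $f_r$ in Table \ref{tab:fr} for $4\le r\le 10$; the integration is straightforward numerically, with $r=3$ possibly tractable in closed form. Transferring the bound from the pairing model to $G(n,r)$ itself is standard, since conditioning on the simple-graph event changes asymptotic probabilities by only an $O(1)$ factor.

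The main technical obstacle is the bookkeeping for the vertices that leave $L\cup Z_r$ without entering $F$. A dead vertex $u$ still has half-edges in the pairing and may later be matched to some processed vertex $v'$; if $v'$ joins $F$ then $u$ is added to $V_T$ (via $S_G(v')$) as a non-leaf, which affects the condition $|V_T \cap N_G(\cdot)| \le 1$ for subsequent iterations. A further subtlety is that when the else-branch fires on $v$, any $L$-neighbors of $v$ are removed from $L$ and become silent $V_T$-vertices that can still accumulate tree edges via later matches. Capturing all of this cleanly likely requires one or two additional state variables (for instance, the total number of unmatched half-edges at dead/silent vertices, split by whether the host vertex currently lies in $V_T$); once these are in place the ODE system remains low-dimensional and the analysis proceeds along standard lines.
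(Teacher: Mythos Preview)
Your high-level plan matches the paper's: configuration model, deferred decisions, Wormald's differential equation method. Two points need sharpening before the argument goes through.

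First, the state variables. Your suggestion of ``one or two additional'' aggregate half-edge counts for dead/silent vertices will not close the ODE system. The paper stratifies the non-$T$ vertices by their \emph{exact} number of unrevealed half-edges, tracking $Z_i$ = (number of vertices not in $T$ with $i$ unrevealed points) for every $i\in[r]$, together with $L$, $F$, and the total $M$ of unrevealed points. The reason aggregation fails: the success probability is $(Z/M)^{r-1}$ with $Z=\sum_i iZ_i$, but the expected one-step change in $Z$ involves $\sum_i i^2 Z_i$ (a hit on a $Z_j$ vertex removes $j$ half-edges from the $Z$-pool on success, not just one), and that second moment's evolution involves the third, etc. So you are forced to carry the full vector $(Z_1,\ldots,Z_r)$; the system is then $(r+3)$-dimensional, not five- or six-dimensional. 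Incidentally, once you track the $Z_i$ you do \emph{not} need a separate variable for silent $T$-vertices: their half-edge count is simply $M-(r-1)L-Z$, and it enters only through $M$.

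Second, the priority rule. Because the algorithm always draws from $L$ when it is nonempty and only otherwise from $Z_r$, the process is not a single smooth Markov chain amenable to the basic Wormald theorem. The paper handles this with a two-phase analysis: Phase~1 runs only $\Op_1$ (process from $L$) until the trajectory $\tilde z_{r+1}$ for $L/n$ hits zero, and Phase~2 uses Wormald's \emph{deprioritized} framework, replacing the strict priority by a probabilistic blend of $\Op_1$ and $\Op_2$ with mixing weight $p=\alpha/(\alpha+\tau)$ chosen so that $L$ stays near zero (here $\alpha$ is the expected gain in $L$ from an $\Op_2$ and $\tau$ the expected loss from an $\Op_1$). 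You need to invoke this machinery explicitly; integrating a single ODE ``until $|L|+|Z_r|=0$'' glosses over the phase transition and is not what the standard theorem licenses.

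A minor correction: when a dead non-$T$ vertex $u$ is later absorbed by a successful $v'$, it enters $T$ as a \emph{leaf} (only the edge $uv'$ is added), not a non-leaf. And the paper confirms that only $\tilde z_{r+3}=M/n$ and $\tilde z_r$ admit closed forms in Phase~1; the remaining equations are coupled through $z=\sum iz_i$ and are solved numerically, so ``$r=3$ possibly tractable in closed form'' is optimistic.
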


\begin{table}[h]
    \centering
    \begin{tabular}{c|c|c}
        $r$ &$f_r$ &$u_r$\\
        \hline
        3 &.4591 &.5000 \\
        4 & .2699 &.3333\\
        5 & .1811& .2500\\
        6 & .1315 & .2000\\
        7 & .1006& .1667\\
        8 & .0799 & .1429\\
        9 & .0652 & .1250 \\
        10& .0545 & .1111
    \end{tabular}
    \caption{Values of $f_r, u_r$ such that $f_r n \le \phi(G(n,r))\le u_r n$ w.h.p. The bounds $f_r$ come from Algorithm \ref{alg:algo} and $u_r = 1/(r-1)$, deterministically from Proposition \ref{prop:simple-bds}.}
    \label{tab:fr}
\end{table}

In the following section, we will use the differential equations method to prove Theorem \ref{thm:main}. We describe the expected one-step changes of several parameters throughout the algorithm in Section \ref{sec:onestep}. Then, in Section \ref{sec:traj}, we will apply a general theorem of Wormald \cite{wormald-phases,wormald} to show concentration of this system of random variables around their expected trajectories in order to complete the proof of Theorem. \ref{thm:main}.

\section{Analysis of Algorithm \ref{alg:algo}}\label{sec:analysis}
In our set up, we make use of the \emph{configuration model}  to analyze our algorithm on $G(n,r)$ (see e.g.\ \cite{wormald-rrg} for more details on the description which follows). Suppose $rn$ is even and consider a set of $rn$ 
\emph{configuration points} partitioned into $n$ labeled \emph{buckets} $v_1,\ldots, v_n$ each of size $r$. A \emph{pairing} of these points is a perfect matching of the configuration points. Given a pairing $P$, we may obtain a multigraph $G(P)$ by contracting each of the buckets to one vertex. It is well known that the restriction of this probability space to simple graphs is $G(n,r)$ and that for fixed $r$, the probability that the pairing generates a simple graph is bounded away from $0$ (independently of $n$). Thus any event which holds w.h.p.\ over the space of random pairings also holds w.h.p.\ for $G(n,r)$. 

We analyze our algorithm by tracking certain parameters throughout the execution of the algorithm. We only reveal partial information about $G(n,r)$ (or more precisely, the pairing) as the algorithm progresses. 
$T$ will represent the current forest being built. 
Each iteration of the while loop will be called a \emph{step}. At each step, we \emph{process} a vertex $v$. When $v$ is processed, we reveal its neighbors in the configuration (some of $v$'s neighbors may already have been revealed). Let $L = L(t)$ represent the vertices which are in $T$ and have $r-1$ unrevealed configuration points. All vertices of $L$ are leaves of $T$, but not all leaves of $T$ are in $L$. 
For $i\in [r]$, let  $Z_i = Z_i(t)$ denote the set of vertices that \emph{are not in} $T$ and have $i$ unrevealed neighbors at time step $t$. $F = F(t)$ represents the set of vertices of degree $r$ in $T$ at time step $t$. Thus, we either process a vertex which is a leaf of $T$ (when $v\in \L$) or a vertex which has never been seen (when $v\in Z_r$). A step is a \emph{success} if at most one of $v$'s neighbors already lies in $T$ (when $v\in \L$, the previously revealed neighbor certainly lies in $T$). In this case, we may add $v$ and its neighbors to $T$.

%The algorithm only exposes vertices and edges in the graph $G$ as needed while simultaneously building a tree $T$ on $G$. It begins by selecting a random vertex to be the root of the tree, exposing its neighbors, and adding this root and all three of its neighbors to the tree. Thus, the root is our first vertex to be added to $F$ and its neighbors are added to $Y_2$. 

 We consider our algorithm to have two phases. In Phase 1, we only process vertices from $\L$. We say Phase 2 begins when the first vertex from $Z_r$ is processed. In Phase 2, we process vertices from both $\L$ and $Z_r$. 
 %our queue is essentially our $Y_2$ set, i.e. the set of untested leaves in $T$. We select a vertex $v\in Y_2$ during each step. Phase 1 terminates once $Y_2=\emptyset$. We begin Phase 2 by selecting a vertex $v\in Z_3$, and then each following step selects a vertex either from $Y_2$ if $Y_2\neq\emptyset$ or $Z_3$ if $Y_2=\emptyset$.
In Figure \ref{treediagram}, we illustrate the possibilities when processing a vertex $v$ from $L$ in the case of  a random cubic graph.
In that case, there are three possibilities, shown in Figure \ref{treediagram}: (1) both of $v$'s newly revealed neighbors are not currently in $T$, (2) only one of these neighbors is currently in $T$, or (3) both of these neighbors are currently in $T$. If both of these neighbors are not currently in $T$ (Case 1 in Figure \ref{treediagram}), then we can make $v$ full degree by adding all of its neighbors and incident edges to $T$, i.e. the step is a success. In Cases 2 and 3, adding $v$  and its neighbors to $T$ could potentially create a cycle and so we do not attempt to do so. We note that since $T$ is a forest, having 2 neighbors in $T$ does not guarantee the creation of a cycle, but it is simpler for our analysis to err on the side of caution.

   \begin{figure}[h]
       \centering
       \includegraphics[height=3cm]{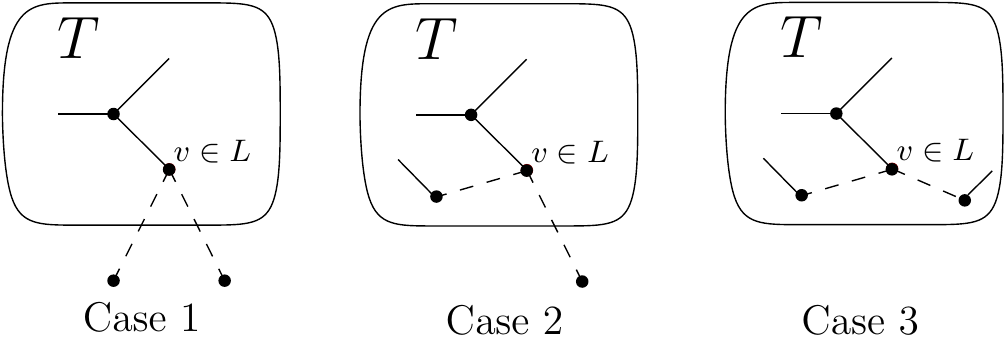}
       \caption{Selecting $v \in L$ and exposing its neighbors}
       \label{treediagram}
   \end{figure}

\subsection{Expected One Step Changes}\label{sec:onestep}

In a common abuse of notation, we refer to $L$, $Z_i$, and $F$ as the sets they represent as well as the sizes of those sets. 
Let 
\[Z := \sum_{i=1}^{r}iZ_i\] so that $Z$ represents the number of unrevealed configuration points corresponding to the $Z_i$'s.
Let $M = M(t)$ represent the number of unrevealed configuration points at time step $t$.
There are two types of operations which we perform. Following notation from \cite{wormald-phases}, let $\Op_1$ denote the operation of processing a vertex from $\L$ and let $\Op_2$ denote the operation of processing a vertex from $Z_r$. Let $\op_t\in \set{\Op_1, \Op_2}$ represent the operation performed at time step $t$ of the algorithm.

\subsubsection{Phase 1}\label{sec:phase1}

Let $\mathcal{F}_t$ represent the revealed part of the configuration model at time $t$. As mentioned before, in Phase 1, we only process vertices from $\L$. For random variable $X = X(t)$, let $\Delta X(t) = X(t+1)-X(t).$ For $i\in [r-1]$,
\begin{align}
\mathbb{E}[\Delta Z_i(t)\, |\, \mathcal{F}_t, \op_t=\Op_1 ] &= (r-1)\cdot\of{-\frac{iZ_i}{M} + \frac{(i+1)Z_{i+1}}{M}\cdot\of{1 - \of{\frac{Z}{M}}^{r-2}}} + O\bfrac 1n \label{eq:zigy}\\
\mathbb{E}[\Delta Z_r(t)\, |\, \mathcal{F}_t, \op_t=\Op_1 ] &= (r-1)\cdot\of{-\frac{rZ_r}{M} } + O\bfrac 1n \label{eq:zrgy}\\
\mathbb{E}[\Delta \L(t)\, |\, \mathcal{F}_t, \op_t=\Op_1 ]&=-1+(r-1)\cdot\of{-\frac{(r-1)\L}{M} + \frac{rZ_r}{M}\cdot\of{\frac{Z}{M}}^{r-2}}+O\bfrac{1}{n} \label{eq:ygy}\\
\mathbb{E}[\Delta F(t)\,|\, \mathcal{F}_t, \op_t=\Op_1]&= \bfrac{Z}{M}^{r-1} + O\bfrac 1n \label{eq:fgy}\\
\mathbb{E}[\Delta M(t)\,|\,\mathcal{F}_t, \op_t=\Op_1]&= -2(r-1).\label{eq:mgy}
\end{align}
To see \eqref{eq:zigy}, note that the vertex $v$ that we are processing has $r-1$ unrevealed configuration points. If any of these points pair with a point in a $Z_i$ vertex, then that vertex is no longer a $Z_i$ vertex. This happens with probability $\frac{iZ_i+O(1)}{M+O(1)} = \frac{iZ_i}{M}+O(1/n)$ where the error term is $O(1/n)$ since we will assume that $M=\Omega(n)$. The error terms in the rest of the explanation are similar and will be ignored. We gain a $Z_i$ vertex if the step is not a success but one of the revealed points pairs with a $Z_{i+1}$ point. The factor $(i+1)Z_{i+1}/M$ represents the probability that a point pairs with a $Z_{i+1}$ point and $1-  \bfrac{Z}{M}^{r-2}$ represents the probability that the other revealed points do not all land in $Z$ (which would mean a success).
In \eqref{eq:ygy}, the $-1$ accounts for the loss of the $\L$ vertex which we are processing. Again, we will lose $\L$ vertices when the revealed points pair to $\L$ vertices.  We gain $\L$ vertices if the step is a success and one of the points pairs with a $Z_r$ vertex. The expected change in $F$ is just the probability that the step is a success which is $\bfrac ZM ^{r-1}$. Finally, note that at each step we reveal $r-1$ pairs from the configuration which explains \eqref{eq:mgy}.

% i'm switching between "configuration points" and "vertices"

\subsubsection{Phase 2}\label{sec:phase2}
The expected one step changes in Phase 2 when processing a vertex from $\L$ are given by equations \eqref{eq:zigy}--\eqref{eq:mgy}.

The expected one step changes when processing a vertex from $Z_r$ are as follows.
For ease of notation let $P := \bfrac{Z}{M}^{r-1} + (r-1)\cdot \bfrac{Z}{M}^{r-2}\cdot\of{1-\frac{Z}{M}}$. Note that this represents the probability of a success step when performing $\Op_2$ conditional on one revealed point pairing with a $Z$ vertex. For $i \in [r-1]$,
\begin{align}
\mathbb{E}[\Delta Z_i(t)\, |\, \mathcal{F}_t, \op_t=\Op_2 ] &=  r\cdot\of{-\frac{iZ_i}{M} + \frac{(i+1)Z_{i+1}}{M}\cdot\of{1 - P}} + O\bfrac 1n \label{eq:zigz}\\
\mathbb{E}[\Delta Z_r(t)\, |\, \mathcal{F}_t, \op_t=\Op_2 ] &= -1+ r\cdot\of{-\frac{rZ_r}{M}} + O\bfrac 1n \label{eq:zrgz}\\
\mathbb{E}[\Delta \L(t)\, |\, \mathcal{F}_t, \op_t=\Op_2 ]&=r\cdot\of{-\frac{(r-1)\L}{M} + \frac{rZ_r}{M}\cdot P}+O\bfrac{1}{n} \label{eq:ygz}\\
\mathbb{E}[\Delta F(t)\,|\, \mathcal{F}_t, \op_t=\Op_2]&= \bfrac{Z}{M}^r +r\cdot\bfrac{Z}{M}^{r-1}\cdot\of{1-\frac{Z}{M}} + O\bfrac 1n \label{eq:fgz}\\
\mathbb{E}[\Delta M(t)\,|\,\mathcal{F}_t, \op_t=\Op_2]&= -2r\label{eq:mgz}
\end{align}

The explanations for these are similar to those for $\Op_1$. The main differences are that (i) we now reveal $r$ pairs from the configuration model rather than $r-1$, (ii)  a step is still a success if one of the revealed points pairs with a $T$ (i.e. ``non-$Z$'') vertex.

\subsection{Trajectories from differential equations}\label{sec:traj}
For ease of notation, we let $a=r+3$ and set $\tbf{Z} = (Z_1,\ldots,Z_{a}) = (Z_1,\ldots, Z_r, L, F, M)$.
For $i\in[a]$ and $j\in [2]$, we define 
$f_{i,j} (t/n, Z_1(t)/n, \ldots, Z_a(t)/n)$ to be the expression given on the right hand side shown in \eqref{eq:zigy}-\eqref{eq:mgz} for $\mathbb{E}[\Delta Z_i(t)\, |\, \mathcal{F}_t, \op_t=\Op_j ]$ ignoring the $O(1/n)$ terms so as to remove the dependence on $n$. So for example, letting $x=t/n$, $\tbf{z}=(z_1,\ldots, z_a)$ and $z = \sum_{i=1}^r iz_i$ where $z_i(x) = Z_i(t)/n$, we have 
$f_{i, 1}(x,\tbf{z}) = (r-1)\of{-\frac{iz_i}{z_{r+3}} + \frac{(i+1)z_{i+1}}{z_{r+3}}\cdot\of{1 - \bfrac{z}{z_{r+3}}^{r-2}}}$ for $i\in[r-1]$.

The details of the following differential equations method have been omitted, but they are by now standard (see for example \cite{duckworth-mans, duckworth-zito, wormald-rrg,wormald-phases}). In Phase 1, we have an ordinary application of the differential equations method. In Phase 2, we have a prioritized algorithm which performs a mixture of 2 types of steps. The results of Wormald essentially say that we may instead analyze a deprioritized algorithm which selects vertices according to a pre-determined probability function (which in effect blends the two types of steps appropriately). We note that our functions $f_{i,j}$ are well behaved (e.g.\ they have continuous and bounded derivatives) as long as $z_{r+3} = M/n$ stays bounded away from 0. As our numerical solutions show\footnote{A Maple worksheet which can be used to verify the claimed results can be found at \url{https://msuweb.montclair.edu/~bald/research.html}}, this is the case for all values of $r$ considered in Table \ref{tab:fr}. 
%According to the differential equations method (for example Theorem 3 which appears in \cite{wormald-phases}), we have that in Phase 1, the 

We now describe the blending of the steps for Phase 2 as described in \cite{wormald-phases}.
Suppose in Phase 2, an $\Op_2$ creates, in expectation, $\alpha$ many vertices of $L$ and suppose that performing an $\Op_1$ decreases the number of $L$ vertices, in expectation, by $\tau$. Then we would expect an $\Op_2$ to be followed by $\alpha/\tau$ many $\Op_1$ steps. Then in Phase 2, we would expect the proportion of $\Op_2$ steps to be $1/(1 + \alpha/\tau) = \tau/(\tau+\alpha)$ and the proportion of $\Op_1$ steps to be $\alpha/(\tau + \alpha)$. 
Let $x=t/n$ and let $\tbf{z} = (z_1,\ldots, z_a)$. Then (recalling that $L$ is now represented by $Z_{r+1}$) the asymptotic values of $\alpha$ and $\tau$ are given by 
\[\alpha = f_{r+1, 2}(x, \tbf{z}),\qquad \tau = -f_{r+1, 1}(x, \tbf{z}).\]
We let \[p:= \frac{\alpha}{\tau+\alpha}\] and 
\[F(x, \tbf{z}, i, k) = 
\begin{cases}
f_{i,1}(x, \tbf{z}) &\textrm{ if } k=1\\
p\cdot f_{i,1}(x,\tbf{z}) + (1-p)\cdot f_{i,2}(x, \tbf{z}) &\textrm{ if } k=2
\end{cases}\]
Then for Phase $k\in [2]$, and $i\in [a]$, we let 
\begin{equation}\label{eq:sys}
    \frac{d\tz_i}{dx} = F(x,\ttz, i, k)
\end{equation} 
with initial conditions for Phase 1 given by $\tz_i(0)= 0$ for $i\in \set{1,\ldots, r-1,r+1, r+2}$, $\tz_r(0)=1$, $\tz_{r+3}(0) = r$. 
Numerical solutions of the Phase 1 system show that in this phase, $\tz_{r+1}$ increases and then decreases until it hits zero at which time Phase 2 begins. Let $\rho_1^r > 0$ be the first time when $\tz_{r+1}(\rho_1^r)=0.$ The numerical solutions show that all other tracked variables are bounded away from 0 at time $\rho_1^r$. The initial conditions for Phase 2 are given by the final values of Phase 1, i.e. $\tz_i(\rho_1^r)$ for all $i\in [a]$. 
%Approximate values of $\rho_1 = \rho_1^r$ are shown in Table \ref{tab:fr}. 
 Numerical solutions of the Phase 2 system then imply that Phase 2 ends at a time $\rho_2 = \rho_2^r$, when $\tz_r(\rho_2) = 0$. A more detailed explanation of the solutions in the cases $r=3$ and $4$ can be found in the Appendix. The conclusion of the differential equations method is that $Z_i(t) = n\tz(t/n) + o(n)$ for all $i$ and for all $0\le t \le \rho_2^r n$.
 The number of full degree vertices can then be represented by $f_rn = \tz_{r+2}(\rho_2^r)\cdot n$.

\section{Conclusion}\label{sec:conc}
We point out that some of the functions in the Phase 1 system can be solved for analytically. In particular, we have, using the inital conditions $\tz_{r+3}(0)=r$ and $\tz_r(0)=1$, that in Phase 1,
\[\tz_{r+3}(x) = r - 2(r-1)x\]
and hence
\[\tz_r(x) = \of{1 - \frac{2(r-1)}{r}x}^{r/2}.\]
Unfortunately, the equations for the other variables depend on $z = \sum_i iz_i$ which we don't currently see how to deal with.

It is an intriguing open problem to determine whether random regular graphs contain full degree spanning trees with an optimal (or asymptotically optimal) number of full degree vertices. 
\begin{problem}
Does $G\sim G(n,r)$ satisfy $\phi(G)  = \frac{n}{r-1}(1+o(1))$?
\end{problem}
Perhaps applying the second moment method to a particular tree with only vertices of degree $r$ and $1$ could be used to show this. We note that as a function of $r$, our $f_r$ seem to decay at a rate faster than $1/r$, so it seems unlikely that a simple modification of our algorithm will succeed in proving this. 

The lower bound in Proposition \ref{prop:simple-bds} can be thought of as arising from a greedy algorithm which removes a vertex, and its first and second neighborhoods at each step. By analyzing a slightly more sophisticated algorithm (see  Lemma 5.2 in \cite{balschudrich})
 which allows the neighborhoods of the chosen vertices to overlap in one vertex, one can prove the lower bound 
 $\phi(G) \ge \frac{2}{\Delta^2 + \Delta + 2}\cdot n = \of{1 + o_\Delta(1)}\frac{2}{\Delta^2} \cdot n$. 
 This lower bound is almost tight (up to a constant factor) as can be seen by the following construction which essentially appears in  \cite{bhatia}. Let $G = (K_{\Delta/2} \square K_{\Delta/2})\square C_{4/\Delta^2}$. Here, one can take at most one vertex to be of full degree from each copy of $K_{\Delta/2} \square K_{\Delta/2}$ and so $\phi(G)\le 4/\Delta^2$.  It would be interesting to improve this factor of 2. 
\begin{problem}
    Determine the best possible deterministic lower bound for $\phi(G)$ for all connected graphs with $\Delta(G) = \Delta$.
\end{problem}

\section*{Acknowledgments}
We would like to thank Patrick Bennett for helpful discussions about the topic.

%\bibliographystyle{plain}
%\bibliography{fdst}
\printbibliography

\appendix
\newpage 

\section{Discussion of $r=3,4$}

\begin{figure}
    \centering
    \begin{minipage}{0.45\textwidth}
        \centering
        \includegraphics[width=0.9\textwidth]{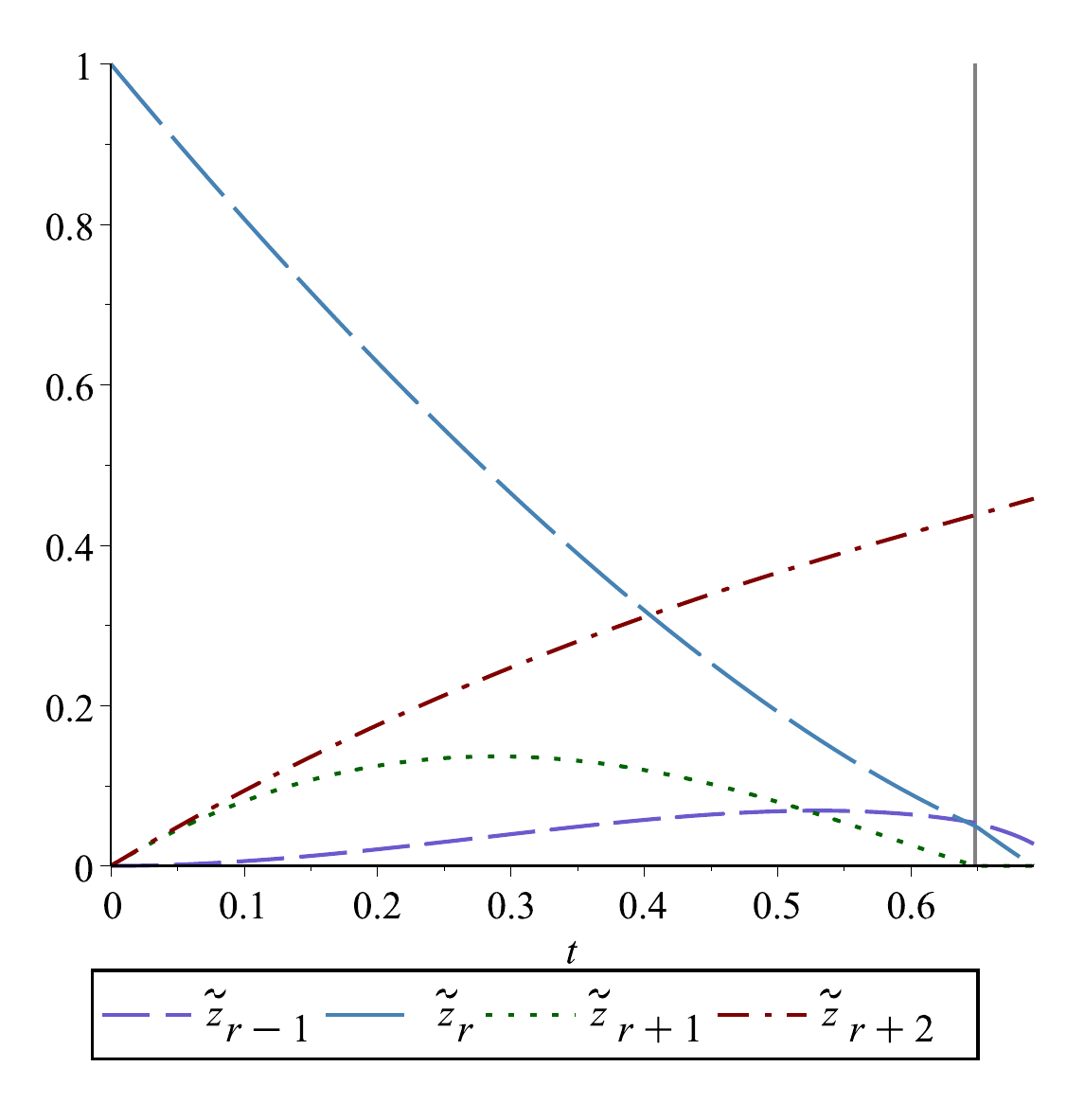} % first figure itself
        
        \caption{Trajectories for $r=3$}\label{fig:r3}
    \end{minipage}\hfill
    \begin{minipage}{0.45\textwidth}
        \centering
        \includegraphics[width=0.9\textwidth]{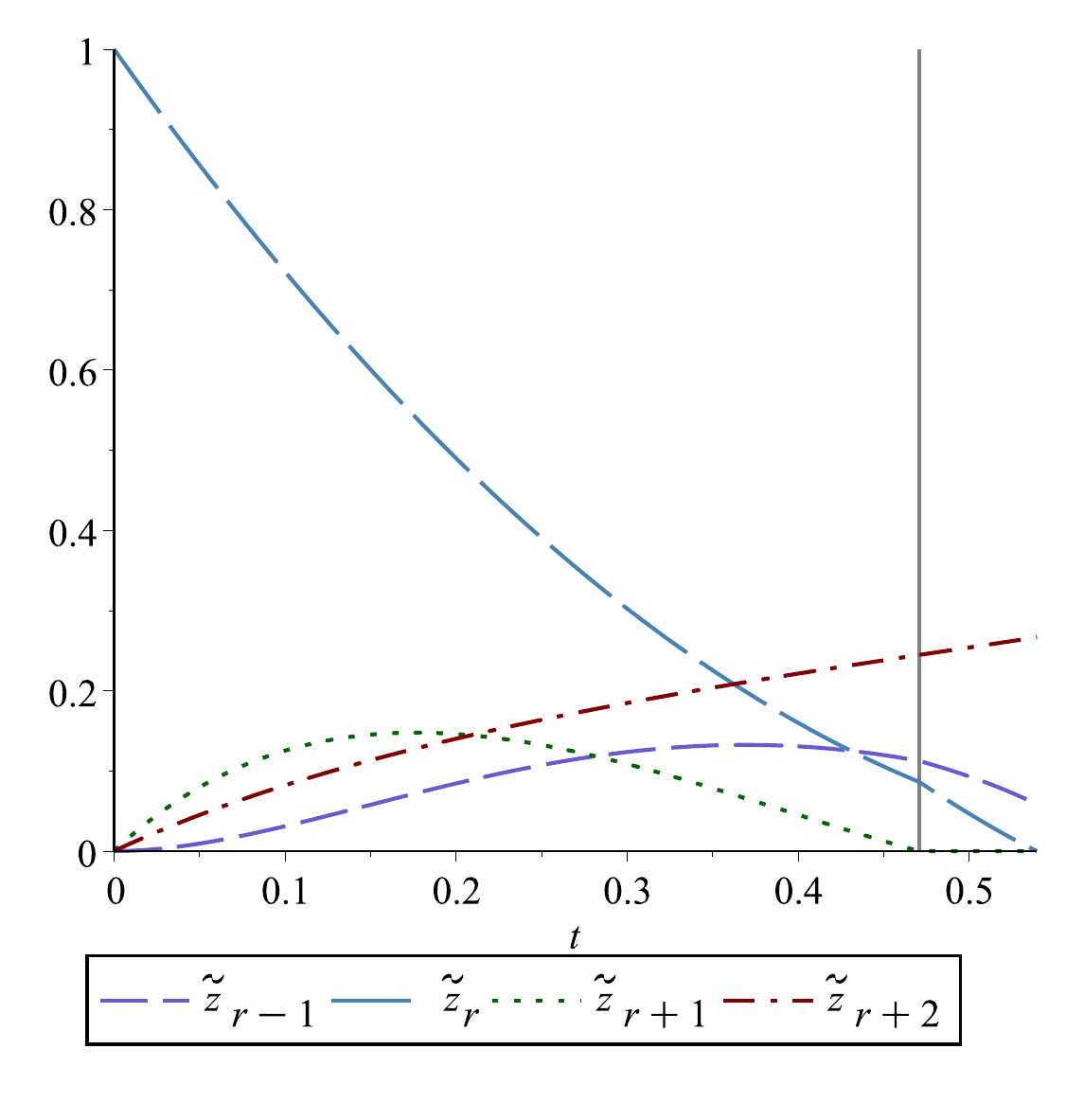} % second figure itself
        
        \caption{Trajectories for $r=4$}\label{fig:r4}
    \end{minipage}
\end{figure}

\subsection{$r=3$}
In Figure \ref{fig:r3} we see some of the solutions to the system given in \eqref{eq:sys} for $G\sim G(n,3)$. Phase 1 ends at the vertical line at time $\rho_1 \approx 0.6485$. The initial conditions for Phase 2 are then given by
$\tz_1(\rho_1) \approx 0.0193$, $\tz_2(\rho_1)\approx 0.0536$, $\tz_3(\rho_1)\approx 0.0498$, $\tz_4(\rho_1)=0$, $\tz_5(\rho_1)\approx 0.4375$, $\tz_6(\rho_1)\approx 0.4060$. Thus at the end of Phase 1, the algorithm has found a forest with $\approx 0.4375 n$ vertices of full degree. We note that this already implies that w.h.p.\ $\lambda(G) \ge 0.4375 n$ and $\gamma_C(G) \le 0.5625 n$, an improvement over the best known bounds given in \cite{duckworth} for these problems. Phase 2 then ends when $\tz_3 =0$ at time $\rho_2 \approx 0.6922$. As one can see, $\tz_2$ (and hence $\tz_{r+3}$) is bounded away from 0 at $\rho_2$.

\subsection{$r=4$}
In Figure \ref{fig:r4} we see some of the solutions to the system given in \eqref{eq:sys} for $G\sim G(n,4)$. Phase 1 ends at the vertical line at time $\rho_1 \approx 0.4707$. The initial conditions for Phase 2 are then given by
$\tz_1(\rho_1) \approx 0.0119$, $\tz_2(\rho_1)\approx 0.0548$, $\tz_3(\rho_1)\approx 0.1124$, $\tz_4(\rho_1)\approx 0.0864$, $\tz_5(\rho_1)= 0$, $\tz_6(\rho_1)\approx 0.2445$, $\tz_7(\rho_1)\approx 1.1757$.   Phase 2 then ends when $\tz_4 =0$ at time $\rho_2 \approx 0.5397$.
   
\end{document}